\newtheorem{thm}{Theorem}
\newtheorem{property}{Property}
\newcommand{\beqn}{\begin{eqnarray}\begin{aligned}}
\newcommand{\eqn}{\end{aligned}\end{eqnarray}}
\newcommand{\fra}[2]{\textstyle{\frac{#1}{#2}}}
\begin{document}


\title{Multiplicatively closed Markov models must form\\  Lie algebras}
\author{Jeremy G Sumner\\ \small University of Tasmania, Australia\\ \small jsumner@utas.edu.au} 

\maketitle

      
      
\begin{abstract}
We prove that the probability substitution matrices obtained from a continuous-time Markov chain form a multiplicatively closed set if and only if the rate matrices associated to the chain form a linear space spanning a Lie algebra.
The key original contribution we make is to overcome an obstruction, due to the presence of inequalities that are unavoidable in the probabilistic application, that prevents free manipulation of terms in the Baker-Campbell-Haursdorff formula.
\end{abstract}

\noindent\textbf{Keywords}: Lie algebras; continuous-time Markov chains; semigroups;  phylogenetics

\noindent\textbf{AMS classification codes}: 	17B45; 60J27

\section{Background}

In this note we prove a result which makes explicit the requirement that a \emph{multiplicatively-closed} Markov model must form a Lie algebra (definitions will be provided).
We consider continuous-time Markov chains and work under the general assumption that a model is determined by specifying a subset of rate matrices (or rate generators).
These models are used in a wide array of scientific modelling problems and have been previously studied in the context of Lie group theory by \cite{johnson1985markov,mourad2004lie}.
Although the results given here are general, we are motivated primarily by applications to phylogenetics. 

Phylogenetics consists of the mathematical and statistical methods applied to reconstructing evolutionary history from observed molecular sequences such as DNA \cite{felsenstein2004}. 
Recent theoretical work \cite{fernandez2015lie,sumner2012lie} has discussed the relevance of Lie groups and algebras to this applied area.
The importance of Lie algebras to robust phylogenetic modelling has been demonstrated using simulation in \cite{sumner2012general}, as well as on a diverse set of biological data sets in \cite{woodhams2015new}.
The class of Markov models that form Lie algebras is discussed in the recent textbook on mathematical phylogenetics \cite{steel2016phylogeny}, and this approach also has important applications outside of phylogenetic modelling \cite{house2012lie}.
However, previous work on this topic has not established the necessity of a Lie algebra in the general setting.
In Theorem~\ref{thm1}, we establish that the Lie algebra property is a consequence of model assumptions which we claim are natural, easily understandable, and well justified in the applied setting.

Our results fit into the general theory of Lie semigroups and Lie semialgebras as developed by Hilgert and Hofmann \cite{hilgert1985semigroups}.
However, the approach we follow here gives the most direct path explicitly tailored to the practical setting of Markov chains and does so with minimal abstract theory.

\section{Main result}

Fixing notation, we denote $\mathcal{L}\subset \text{Mat}_{n}(\mathbb{R})$ as the set of real valued $n\times n$ matrices with zero-column sums and $\mathcal{L}^+\!\subset\mathcal{L}$ as the subset of matrices with non-negative off-diagonal entries.
We then have the interpretation that $Q\in \mathcal{L}$ corresponds to a valid rate matrix for a continuous-time Markov chain if and only if $Q\in \mathcal{L}^+$.
To distinguish from members of $\mathcal{L}$ we refer to the members of $\mathcal{L}^+$ as \emph{stochastic}.

The reader who prefers to use row sums for matrices associated to a Markov chain, may simply modify the definitions above appropriately and read what follows without variation.

We assume that a given model is then specified as a subset $\mathcal{R}^+\subseteq \mathcal{L}^+$ which is defined either using a parameterization, or by giving some (polynomial) constraints on the matrix entries.
In phylogenetics, the former situation is the norm and it is standard to use methods such as maximum likelihood to provide estimates of these model parameters.
However, the former specification can usually be reinterpreted using the latter --- which also plays a role in some formulations (such as the `group-based' \cite[Chap. 8]{semple2003phylogenetics} and `equivariant' \cite{draisma2009ideals} model classes).  
An example of a popular phylogenetic model will be given in the next section.
This motivates:

%
\begin{property}
\label{prop0}
A model $\mathcal{R}^+$ is expressible as an intersection $\mathcal{R}^+=\mathcal{R}\cap \mathcal{L}^+$ where  $\mathcal{R}\subseteq \mathcal{L}$ is determined by a finite set of polynomial constraints on the matrix entries of members of $\mathcal{L}$.
That is, there exist polynomials $f_1,f_2,\ldots ,f_r$ on the variables $Q\!=\!(q_{ij})$ such that $\mathcal{R}=\left\{Q\in \mathcal{L}:0=f_1(Q)\!=\!f_2(Q)\!=\!\ldots \!=\!f_r(Q)\right\}\subseteq \mathcal{L}$.
Additionally, we demand that $\mathcal{R}$ is \emph{minimal} in the sense that there is no similarly constructed set $\mathcal{R}'\subset \mathcal{R}$ such that $\mathcal{R}^+=\mathcal{R}'\cap \mathcal{L}^+$ also.
\end{property}
%

Although Property~\ref{prop0} does not  imply that $\mathcal{R}$ is necessarily unique, the minimality condition ensures that $\mathcal{R}$ does not contain any members superfluous to the determination of $\mathcal{R}^+$.
A simple example gives a clear motivating precedent for this condition, as follows.

Consider $(x,y)\in\mathbb{R}^2$ and the line $y\!=\!x$ restricted to the positive orthant:
\[
\{(x,y)\in \mathbb{R}^2:x\!-\!y=0,x,y\geq 0\}.
\]
Clearly, we most simply obtain this set  by taking the intersection of the positive orthant with the line $y\!=\!x$ (defined as the subset of $(x,y)\in \mathbb{R}^2$ satisfying the polynomial constraint $x\!-\!y=0$).
However, we may also obtain this set by taking the intersection of the positive orthant with the \emph{pair} of lines defined by the quadratic constraint $y^2\!=\!x^2$.
In this case, analogous application of Property~\ref{prop0} would ensure that we choose the former possibility.

%

Following general Markov chain theory in the time-homogeneous setting, given some amount of elapsed time $t$, the probability substitution matrix associated with $Q\in\mathcal{R}^+$ is computed as the matrix exponential $M\!=\!e^{Qt}$ (using the power series $e^A=\sum_{m\geq 0}\fra{1}{m!}A^m$).
Since $t\geq 0$ may take on any non-negative value, it is sensible to consider:

%
\begin{property}
\label{prop1}
A model $\mathcal{R}^+$ is closed under non-negative scalar multiplication.
That is, for all $Q\in \mathcal{R}^+$ and $\alpha\geq 0$, it follows that $\alpha Q\in \mathcal{R}^+$ also.
\end{property}

%

If Property~\ref{prop0} is assumed, Property~\ref{prop1} follows if and only if the polynomial constraints defining $\mathcal{R}$ are homogeneous.
Up to conventions of exactly how models are parameterized (possibly obscured by conventions of overall scaling, and `normalisation'), as far as we are aware \emph{all} phylogenetic models proposed in the literature have Property~\ref{prop1}.
When Property~\ref{prop1} is assumed we may simplify notation by writing $e^{Q}$ in place $e^{Qt}$.

We now place a third reasonable restriction on a model $\mathcal{R}^+$ by imposing, what we refer to as, multiplicative closure.
This property is relevant in any setting that generalises from the time-homogeneous to time-inhomogeneous formulation of continuous-time Markov chains.
In rough terms, what we mean by this is that, if $Q,Q'$ are in a model, then there exists another $\widehat{Q}$ also in the model such that $e^Qe^{Q'}=e^{\widehat{Q}}$.
This question rouses the BCH (Baker-Campbell-Hausdorff \cite{campbell1898}) formula for all $n\times n$ matrices $A,B$:
\[
\log(e^Ae^B)={A+B+\frac{1}{2}\left[A,B\right]+\frac{1}{12}\left(\left[A,\left[A,B\right]\right]+\left[B,\left[B,A\right]\right]\right)+\ldots}
\]
(where $\log$ is the matrix-log power series and $[A,B]\!=\!AB-BA$ is the `Lie bracket', or `commutator').
This naturally  leads to a discussion of Lie algebras in the context of continuous-time Markov chains.
Precisely how this arises is developed in the argument that follows.
Careful attention to detail must be shown however, since, for certain cases, it is possible that either (i) $\widehat{Q}$ does not belong to $\mathcal{L}^+$ or (ii) the BCH series does not converge.
The obstruction we overcome in this note is that there is no immediate means available to isolate terms in the BCH series since, by construction, a model $\mathcal{R}^+$ does not form a linear space.

The definitions and notation required to state and prove our main result are given in the following steps:

\begin{enumerate}
\item Let $\mathcal{M}$ be the semigroup generated by the set $\exp(\mathcal{R}^+)\!=\!\{e^{Q}:Q\in \mathcal{R}^+\}$.
Equivalently, $\mathcal{M}$ is the intersection of all semigroups that contain $\exp(\mathcal{R}^+)$.
(Notice $\mathcal{M}$ includes the identity matrix, since $I=e^{Q\cdot 0}$, so $\mathcal{M}$ is technically a monoid.)
\item Let $\overline{\mathcal{R}}$ be the set of (scaled) logarithms of the members of $\mathcal{M}$. 
Specifically, for what follows it is sufficient to take $\overline{\mathcal{R}}=\left\{\alpha \log(M): \alpha\geq 0, M\in \mathcal{M}\right\}$ where $\log$ is the matrix-log power series (wherever it converges).
Since $\log(e^{Q\epsilon})=Q\epsilon$ for sufficiently small $\epsilon$, we see that $\mathcal{R}^+\subseteq \overline{\mathcal{R}}$.
In general, this definition allows for the circumstance that $ \overline{\mathcal{R}}$ may contain rate matrices that are non-stochastic  and/or \emph{not members of $\mathcal{R}$} --- the latter is our crucial observation.
\end{enumerate}

We are now in a position to state our third proposed property for continuous-time Markov chains:

%

\begin{property}
\label{prop2}
A model $\mathcal{R}^+$ satisfies $\overline{\mathcal{R}}\subseteq \mathcal{R}$.
\end{property}

%

We claim that Property~\ref{prop2} is a very reasonable demand on a model since it is saying that all expressions of the form $\log(e^Qe^{Q'})=\widehat{Q}$ produce rate matrices $\widehat{Q}$ which satisfy the same constraints as the matrices $Q,Q'$ (up to  possible relaxation of the stochastic condition of membership in $\mathcal{L}^+$).
When this is the case, we say that the model is \emph{multiplicatively closed}.

\begin{thm}
\label{thm1}
Suppose a model $\mathcal{R}^+$ satisfies Property~\ref{prop0}.
Then $\mathcal{R}^+$ satisfies Properties~\ref{prop1} and~\ref{prop2} if and only if $\mathcal{R}=\text{span}_{\mathbb{R}}(\mathcal{R}^+)$ and this space forms a real Lie algebra 
\end{thm}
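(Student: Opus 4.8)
The plan is to prove the two implications separately, treating the ``if'' direction as routine and concentrating on the ``only if'' direction, which carries the real content and the advertised obstruction. For the ``if'' direction, assume $\mathcal{R}=V:=\mathrm{span}_{\mathbb{R}}(\mathcal{R}^+)$ is a Lie algebra. Property~\ref{prop1} is immediate: $\mathcal{R}^+=V\cap\mathcal{L}^+$, and both the linear space $V$ and the cone $\mathcal{L}^+$ are stable under multiplication by $\alpha\geq 0$, so their intersection is too. For Property~\ref{prop2}, every $M\in\mathcal{M}$ is a finite product of exponentials $e^{Q_i}$ with $Q_i\in\mathcal{R}^+\subseteq V$, so $M$ lies in the connected matrix group $G$ with Lie algebra $V$; since $V$ is a closed linear subspace and the convergent matrix logarithm of an element of $G$ returns to $V$ (locally this is the inverse of $\exp\colon V\to G$, and iterated BCH expresses the logarithm as a limit of iterated brackets, all in $V$), we get $\log M\in V$ and hence $\overline{\mathcal{R}}\subseteq V=\mathcal{R}$.

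For the ``only if'' direction, I would first show that $\mathcal{R}^+$ is a \emph{convex cone}. Closure under non-negative scaling is exactly Property~\ref{prop1}. For closure under addition, fix $Q,Q'\in\mathcal{R}^+$ and note that for small $t>0$ the element $\tfrac{1}{t}\log(e^{Qt}e^{Q't})$ lies in $\overline{\mathcal{R}}$ (take $\alpha=1/t$ and $M=e^{Qt}e^{Q't}\in\mathcal{M}$), hence in $\mathcal{R}$ by Property~\ref{prop2}. As $t\to 0^+$ the BCH expansion gives $\tfrac{1}{t}\log(e^{Qt}e^{Q't})\to Q+Q'$, and since $\mathcal{R}$ is closed (being cut out by polynomials) we obtain $Q+Q'\in\mathcal{R}$; as $Q+Q'$ is manifestly in $\mathcal{L}^+$ we conclude $Q+Q'\in\mathcal{R}^+$.

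Next I would upgrade $\mathcal{R}$ from a cone to the linear space $V:=\mathrm{span}_{\mathbb{R}}(\mathcal{R}^+)$, which is where minimality does the essential work. Writing $V=\{Q\in\mathcal{L}:\ell_1(Q)=\dots=\ell_s(Q)=0\}$ for linear forms $\ell_j$ (all vanishing on $\mathcal{R}^+\subseteq V$), the set $\mathcal{R}\cap V$ is cut out by polynomials, is contained in $\mathcal{R}$, and has the same positive part $\mathcal{R}^+$; minimality then forces $\mathcal{R}\cap V=\mathcal{R}$, i.e.\ $\mathcal{R}\subseteq V$. Conversely, because $\mathcal{R}^+$ is a convex cone spanning $V$ it has non-empty interior \emph{in} $V$, so the defining polynomials $f_1,\dots,f_r$ of $\mathcal{R}$ vanish on a Euclidean-open subset of $V$ and therefore vanish identically on $V$; this gives $V\subseteq\mathcal{R}$. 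Hence $\mathcal{R}=V=\mathrm{span}_{\mathbb{R}}(\mathcal{R}^+)$ is a genuine linear space.

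Finally I would extract the Lie bracket, which is exactly the step the linearity just purchased. For $Q,Q'\in\mathcal{R}^+$ and small $t>0$, Property~\ref{prop2} gives $h(t):=\log(e^{Qt}e^{Q't})\in\mathcal{R}=V$, and BCH yields $h(t)=t(Q+Q')+\tfrac{t^2}{2}[Q,Q']+O(t^3)$. Because $V$ is now a linear space it is closed under subtraction, so $h(t)-t(Q+Q')\in V$; rescaling by $2/t^2$ and letting $t\to 0^+$ leaves $[Q,Q']\in V$ by closedness of $V$. Bilinearity of the commutator and $V=\mathrm{span}_{\mathbb{R}}(\mathcal{R}^+)$ then give $[A,B]\in V$ for all $A,B\in V$, so $V$ is a real Lie algebra. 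I expect the main obstacle to be precisely the linearity step: the advertised obstruction is that one cannot isolate individual BCH terms while $\mathcal{R}^+$ is only a cone, and the resolution is to first use minimality (together with the convex-cone and closedness properties) to promote $\mathcal{R}$ to a linear space, after which the first-order term may legitimately be subtracted off to expose the second-order bracket.
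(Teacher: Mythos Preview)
Your proposal is correct and follows essentially the same route as the paper: use BCH with Property~\ref{prop2} and a $t\to 0$ limit to get closure of $\mathcal{R}^+$ under addition, combine the polynomial nature of the constraints with minimality to promote $\mathcal{R}$ to the linear span, and then subtract off the first-order BCH term to isolate the bracket. The only cosmetic difference is that you invoke minimality first (on $\mathcal{R}\cap V$) and then use the interior-of-the-cone argument for $V\subseteq\mathcal{R}$, whereas the paper reverses the order, establishing $V\subseteq\mathcal{R}$ via the polynomial extension from $\alpha_i\geq 0$ to $\alpha_i\in\mathbb{R}$ and then applying minimality with $V$ as the candidate smaller set.
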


\begin{proof}

Assume throughout that $\mathcal{R}^+$ satisfies Property~\ref{prop0}.

\begin{itemize}
\item Assume $\mathcal{R}^+$ satisfies Properties~\ref{prop1} and~\ref{prop2}.

For $a,b\geq 0$ and $Q,Q'\in \mathcal{R}^+$ we have $aQ,bQ'\in \mathcal{R}^+$ also (by Property~\ref{prop1}).
Then $\log(e^{aQ}e^{bQ'})\!=\!aQ+bQ'+\frac{1}{2}ab\left[Q,Q'\right]+\ldots\in \overline{\mathcal{R}}$ for some choice of $a,b$ small enough such that the series converges.
By Property~\ref{prop2}, we have
$aQ+bQ'+\frac{1}{2}ab\left[Q,Q'\right]+\ldots\in \mathcal{R}$ also.
Choosing $a=b$ and rescaling by $a^{-1}$ (using Property~\ref{prop1}) we have, in the limit  $a\rightarrow 0$, $Q+Q'\in \mathcal{R}$.

Using Property~\ref{prop1}, we observe that this generalizes to $\alpha Q+\beta Q'\in \mathcal{R}$ for all $\alpha,\beta\geq 0$. 
More specifically, since $\alpha Q+\beta Q'\in \mathcal{L}^+$  and $\mathcal{R}^+\!=\!\mathcal{R}\cap \mathcal{L}^+$, we have $\alpha Q+\beta Q'\in \mathcal{R}^+$ for all $\alpha,\beta\geq 0$.
Iterating this result, shows that $\alpha_1 Q_1+\alpha_2 Q_2+\ldots +\alpha_k Q_k\in \mathcal{R}$ for all choices $Q_i\in \mathcal{R}^+$ and $\alpha_i\geq 0$.

However, since the constraints defining $\mathcal{R}$ are polynomial, this result must be true more generally for all choices $\alpha_i\in \mathbb{R}$.
Thus:
\[
\text{span}_{\mathbb{R}}(\mathcal{R^+})=\left\{\alpha_1 Q_1+\alpha_2 Q_2+\ldots +\alpha_k Q_k:Q_i\in\mathcal{R}^+,\alpha_i\in\mathbb{R}\right\}\subseteq \mathcal{R},
\] 
which yields:
\[
\mathcal{R}\cap \mathcal{L}^+=\mathcal{R}^+\subseteq \text{span}_{\mathbb{R}}(\mathcal{R^+})\cap \mathcal{L}^+\subseteq \mathcal{R}\cap \mathcal{L}^+,
\]
so equality $\mathcal{R}^+=\mathcal{R}\cap \mathcal{L}^+= \text{span}_{\mathbb{R}}(\mathcal{R^+})\cap \mathcal{L}^+$ follows, and the minimality condition demanded by Property~\ref{prop0} shows $\mathcal{R}=\text{span}_{\mathbb{R}}(\mathcal{R^+})$.

Having established that $\mathcal{R}$ is a linear space, we can now freely isolate terms in the BCH formula and be guaranteed to stay in $\mathcal{R}$. 
In particular, taking $Q,Q\in \mathcal{R}^+$ and $\epsilon> 0$, we see that 
\[
\log(e^{\epsilon Q}e^{\epsilon Q'})-\left(\epsilon Q+\epsilon Q'\right)=\textstyle{\frac{1}{2}}\epsilon^2\left[Q,Q'\right]+\ldots\in \mathcal{R},
\] 
so, rescaling by $2\epsilon^{-2}$ and taking the limit $\epsilon\rightarrow 0$, we have  $\left[Q,Q'\right]\in \mathcal{R}$ also.
Thus $\text{span}_{\mathbb{R}}(\mathcal{R}^+)=\mathcal{R}$ forms a real Lie algebra, as required.


\item
Assuming $\mathcal{R}=\text{span}_{\mathbb{R}}(\mathcal{R}^+)$ shows that the constraints defining $\mathcal{R}$ are linear, which implies Property~\ref{prop1} is satisfied. 
Further assuming $\text{span}_{\mathbb{R}}(\mathcal{R}^+)$ is a real Lie algebra and applying the BCH formula shows that each member of $\overline{\mathcal{R}}$ is a member of $\mathcal{R}$.
Hence Property~\ref{prop2} is satisfied.
\end{itemize}

\end{proof}


\section{Example}
We illustrate this process with the Hasegawa-Kishino-Yano (HKY) \cite{hasegawa1985} model  of DNA substitutions.
This is an example of a time-reversible model \cite{tavare1986}, and is defined via the parameterization (rows and columns ordered as $A,G,C,T$):
\[
Q=\left(
\begin{matrix}
\ast & \kappa\alpha_A & \alpha_A & \alpha_A \\
\kappa\alpha_G & \ast & \alpha_G & \alpha_G \\
\alpha_C & \alpha_C & \ast &  \kappa\alpha_C \\
\alpha_T & \alpha_T & \kappa\alpha_T  & \ast
\end{matrix}
\right),
\]
where the missing entries $\ast$ are chosen to ensure unit column sums.
The parameters $\alpha_i\geq 0$ are proportional to the equilibrium nucleotide frequencies of the Markov chain and $\kappa\geq 0$ is included to accommodate `transition/transversion' ratio (distinguishing substitutions within both, the `purines' $A\leftrightarrow G$ and, the `pyrimidines' $C\leftrightarrow T$, from substitutions across these two groups).

Equivalently, we may express the HKY model as the subset of rate matrices
\[
\mathcal{R}^+_{\text{HKY}}=
\left\{ Q \in \mathcal{L}^+: \begin{matrix}q_{13}\!=\!q_{14},q_{23}\!=\!q_{24},q_{31}\!=\!q_{32},q_{41}\!=\!q_{42}\\q_{12}q_{23}\!=\!q_{21}q_{13},q_{34}q_{13}\!=\!q_{12}q_{31},q_{43}q_{13}\!=\!q_{12}q_{41},\ldots\end{matrix} \right\}
,
\]
(where the displayed constraints are sufficient to determine the model).
We immediately see that $\mathcal{R}^+_{\text{HKY}}$ is not multiplicatively closed since the defining constraints are not linear.
To illustrate the issue, we give a numerical example.

We chose $Q_1,Q_2\in \mathcal{R}^+_{\text{HKY}}$ via $(\alpha_A,\alpha_G,\alpha_C,\alpha_T;\kappa)=(.02,.01,.005,.009; 1.5)$ and $(.03,.01,.006,.008;1.4)$ respectively, and computed (using Mathematica):
\[
\log(e^{Q_1}e^{Q_2})=
\left(
\begin{matrix}
 -0.0571752 & 0.0718248& 0.0498348& 
  0.0498348\\ 0.0291051& -0.0998949& 0.0200951& 
  0.0200951\\ 0.0109967& 0.0109967& -0.0947047& 
  0.0158953\\ 0.0170734& 0.0170734& 0.0247748& -0.0858252
\end{matrix}
\right).
\]

Attempting to find this matrix in the set $\mathcal{R}^+_{\text{HKY}}$, we are immediately led to \[(\alpha_A,\alpha_G,\alpha_C,\alpha_T)\!=\!(0.0498348,0.0200951,0.0109967, 0.0170734)\] but no consistent solution for $\kappa$ is obtainable (in fact four different values are required).
Therefore,  $\log(e^{Q_1}e^{Q_2})$ is not a member of $\mathcal{R}^+_{\text{HKY}}$ (or indeed $\mathcal{R}_{\text{HKY}}$ under relaxation of the stochastic conditions).


Following the definitions given in the previous section, the form obtained in this example does however suggest that all rate matrices in the closure $\overline{\mathcal{R}}_{\text{HKY}}=\log(\mathcal{M}_{\text{HKY}})$ are of the form
\[
\left(
\begin{matrix}
 \ast & \kappa_1  & \alpha &  \alpha\\ 
\kappa_2 & \ast & \beta & \beta\\ 
 \gamma & \gamma & \ast & \kappa_3 \\ 
 \delta & \delta & \kappa_4 & \ast
\end{matrix}
\right).
\]
That this is indeed the case is confirmed by two simple computations:
\begin{enumerate}
\item[(i).] $\text{span}_{\mathbb{R}}(\mathcal{R}^+_{HKY})=\left\{
\left(
\begin{matrix}
 \ast & \kappa_1  & \alpha &  \alpha\\ 
\kappa_2 & \ast & \beta & \beta\\ 
 \gamma & \gamma & \ast & \kappa_3 \\ 
 \delta & \delta & \kappa_4 & \ast
\end{matrix}
\right)
:\alpha,\beta,\gamma,\delta,\kappa_1,\ldots,\kappa_4\in \mathbb{R}
\right\},
$
\item[(ii).] This set forms a Lie algebra (in fact, this is Model 8.8 in the Lie-Markov hierarchy \cite{fernandez2015lie}). 
\end{enumerate}

Thus, the span of the HKY model forms a Lie algebra (without the additional need to take closure under Lie brackets).

\section{Discussion}

The contribution of this work was to lay out conditions on a continuous-time Markov chain (Properties~\ref{prop0},\ref{prop1},\ref{prop2}) in order to show that multiplicative closure necessitates that the associated rate matrices are minimally contained inside a Lie algebra.
The conditions need to be set up carefully in order to, firstly, be convincingly well-motivated to the applied setting and, secondly, allow for the relatively elementary proof of the main result (Theorem~\ref{thm1}). 
This result provides a solid justification (albeit post-hoc) for recent work exploring the classification, enumeration, and application of this class of Markov models  \cite{fernandez2015lie,sumner2012lie}.

We focussed on continuous-time models and hence, naturally, assumed a model is defined in terms of its rate matrices (c.f. Property~\ref{prop0}). 
This does however leave open the possibility that a Markov chain defined \emph{solely at the level of substitution matrices} could be multiplicatively closed without necessitating the existence of an associated Lie algebra (constructed as the tangent space at the identity).
We conjecture that this is not possible, but leave the details for future work.

\subsubsection*{Acknowledgement}

This research was supported by Australian Research Council (ARC) Discovery Grant DP150100088. 
I would like to thank Michael Woodhams, Barbara Holland, and the anonymous reviewers for their helpful comments on this work.

\bibliography{bch}

\end{document}